\newtheorem{theorem}{Theorem}
\newtheorem{proposition}[theorem]{Proposition}
\newtheorem{claim}{Claim}[theorem]
\newcommand{\tw}{\text{tww}} 
\newcommand{\claimproof}{\noindent\emph{Proof of claim.} }
\newcommand{\smallqed}{{\tiny ($\Box$)}\medskip}
\begin{document}

\title{Neighbourhood complexity of graphs of bounded twin-width\footnote{Florent Foucaud was financed by the French government IDEX-ISITE initiative 16-IDEX-0001 (CAP 20-25) and by the ANR project GRALMECO (ANR-21-CE48-0004). Tuomo Lehtil\"a's research was supported by the Finnish Cultural Foundation and by the Academy of Finland grant 338797.}}
\author{\'Edouard Bonnet\footnote{\noindent Univ Lyon, CNRS, ENS de Lyon, Université Claude Bernard Lyon 1, LIP UMR5668, France.}
\and Florent Foucaud\footnote{\noindent Université Clermont Auvergne, CNRS, Clermont Auvergne INP, Mines Saint-Etienne, LIMOS, 63000 Clermont-Ferrand, France.}~~\footnote{\noindent Univ. Orléans, INSA Centre Val de Loire, LIFO EA 4022, F-45067 Orléans Cedex 2, France.}
\and Tuomo Lehtil\"a\footnote{Univ Lyon, UCBL, CNRS, LIRIS - UMR 5205, F69622, France.}~~\footnote{University of Turku, Department of Mathematics and Statistics, Turku, Finland.}
\and Aline Parreau\footnote{\noindent Univ Lyon, CNRS, INSA Lyon, UCBL, Centrale Lyon, Univ Lyon 2, LIRIS, UMR5205, F-69622 Villeurbanne, France.}
}

\maketitle

\begin{abstract}
We give essentially tight bounds for, $\nu(d,k)$, the maximum number of distinct neighbourhoods on a set $X$ of $k$ vertices in a graph with twin-width at most~$d$.
 Using the celebrated Marcus-Tardos theorem, two independent works [Bonnet et al., Algorithmica '22; Przybyszewski '22] have shown the upper bound $\nu(d,k) \leqslant \exp(\exp(O(d)))k$, with a double-exponential dependence in the twin-width. The work of [Gajarsky et al., ICALP '22], using the framework of local types, implies the existence of a single-exponential bound (without explicitly stating such a bound).  
 We give such an explicit bound, and prove that it is essentially tight. Indeed, we give a short self-contained proof that for every $d$ and $k$ $$\nu(d,k) \leqslant (d+2)2^{d+1}k = 2^{d+\log d+\Theta(1)}k,$$ and build a bipartite graph implying $\nu(d,k) \geqslant 2^{d+\log d+\Theta(1)}k$, in the regime when $k$ is large enough compared to~$d$.
\end{abstract}

\section{Introduction}\label{sec:intro}

The aim of this paper is to refine our understanding of how complex the neighbourhoods of graphs of bounded twin-width can be. We provide an improved bound on the neighbourhood complexity of such graphs, complemented by a construction showing that our bound is essentially tight. The improvements in the bounds for neighbourhood complexities translate directly to better structural bounds and algorithms, in some contexts which are explained below.

\paragraph{Twin-width.} Twin-width is a recently introduced graph invariant~\cite{twin-width-I}; see~\cref{sec:prelim} for a definition.
It can be naturally extended to matrices over finite alphabets and binary structures~\cite{twin-width-I,twin-width-IV,twin-width&permutations}.
Although classes of bounded twin-width are broad and diverse, they allow (most of the time, provided a~witness is given as an input) improved algorithms, compared to what is possible on general graphs or binary structures.

Most prominently, it was shown~\cite{twin-width-I} that, on $n$-vertex graphs given with a~$d$-sequence (a witness that their twin-width is at most~$d$), deciding if a~first-order sentence $\varphi$ holds can be solved in time $f(d,\varphi) n$, for some computable function $f$.
In some special cases, such as for \textsc{$k$-Independent Set} or \textsc{$k$-Dominating Set}\footnote{That is, the problems of deciding whether in an input graph, there are $k$ vertices that are pairwise non-adjacent or whose closed neighbourhood is the entire vertex set, respectively.}, single-exponential parameterised algorithms running in time $2^{O_d(k)} n$ are possible~\cite{twin-width-III}.
In the same setting, the triangles of an $n$-vertex $m$-edge graph can be counted in time $O(d^2 n+m)$~\cite{Kratsch22}.
See~\cite{twin-width-VI,Gajarsky22,PilipczukSZ22} for more applications of twin-width with an algorithmic flavour.

Classes of binary structures with bounded twin-width include bounded treewidth, and more generally, bounded clique-width classes, proper minor-closed classes, posets of bounded width (that is, whose antichains are of bounded size), hereditary subclasses of permutations, as well as $\Omega(\log n)$-subdivisions of $n$-vertex graphs~\cite{twin-width-I}, and particular classes of (bounded-degree) expanders~\cite{twin-width-II}.
A~rich range of geometric graph classes have bounded twin-width such as map graphs, bounded-degree string graphs~\cite{twin-width-I}, classes with bounded queue number or bounded stack number~\cite{twin-width-II}, segment graphs with no $K_{t,t}$ subgraph, and visibility graphs of simple polygons without large independent sets~\cite{twin-width-VIII}, to give a few examples.

If efficiently approximating the twin-width is a challenging open question in general,  this is known to be possible for the above-mentioned classes (albeit a~representation may be needed for the geometric classes) and for ordered graphs~\cite{twin-width-IV}.
By that, we mean that there are two computable functions $f, g$ and an algorithm that, for an input $n$-vertex graph $G$ from the class and an integer $k$, in time $g(k) n^{O(1)}$, either outputs an $f(k)$-sequence (again, witnessing that the twin-width is at most~$f(k)$) or correctly reports that the twin-width of $G$ is larger than~$k$. 

Structural properties of graph classes of bounded twin-width include $\chi$-boundedness~\cite{twin-width-III}, with a polynomial binding function~\cite{bourneuf2023bounded}, smallness (i.e., containing up to isomorphism $2^{O(n)}$ $n$-vertex graphs)~\cite{twin-width-II,twin-width&permutations}, and Vapnik-Chervonenkis (VC) density at most~1~\cite{twin-width-kernels,Gajarsky22,twin-width-VCdensity}. The latter property is the topic of the current article.

\paragraph{VC density and neighbourhood complexity.}
VC density is related to the celebrated VC dimension~\cite{VCdim}.
Given a~set-system (or hypergraph) $\mathcal S$ on a~domain $X$, the \emph{shatter function} $\pi_{\mathcal S}: \mathbb N \to \mathbb N$ is defined as $$\pi_{\mathcal S}(n)=\max\limits_{A \in {X \choose n}} |\{Y \subseteq A~|~\exists S \in \mathcal S, Y = A \cap S\}|.$$
The Perles-Sauer-Shelah lemma states that $\pi_{\mathcal S}(n)=O(n^d)$ if the VC dimension of $\mathcal S$ (i.e., the supremum of $\{n~|~\pi_{\mathcal S}(n)=2^n\}$) is a finite integer~$d$.
Then the VC density of $\mathcal S$ is defined as $\inf \{c \in \mathbb R~|~\pi_{\mathcal S}(n)=O(n^c)\}$, and as $+\infty$ if the VC dimension is unbounded.

We define the \emph{VC density} of an infinite class $\mathcal C$ of finite graphs as the VC density of the infinite set-system formed by the neighbourhood hypergraph of the disjoint union of the graphs of~$\mathcal C$, that is, $\{N_G(v)~|~v \in V(\uplus_{G \in \mathcal C} G)\}$, where $N_G(v)$ denotes the set of neighbours of $v$ in $G$.
The VC density is an important measure in finite model theory, often more tractable than the VC dimension (see for instance~\cite{Aschenbrenner13,Aschenbrenner16}).
Tight bounds have been obtained for the VC density of (logically) definable hypergraphs from graph classes of bounded clique-width~\cite{Paszke20} (with monadic second-order logic), and more recently, of bounded twin-width \cite{Gajarsky22} (with first-order logic).

In structural graph theory and kernelisation~\cite{Fomin19} (a subarea of parameterised complexity~\cite{Cygan15}) the function $\pi_{\mathcal N(G)}$, where $\mathcal N(G)$ is the neighbourhood hypergraph of $G$, is often\footnote{Some authors define the neighbourhood complexity as $n \mapsto \frac{\pi_{\mathcal N(G)}(n)}{n}$.} called \emph{neighbourhood complexity}. (See~\cite{BFS19} for an algorithmic study of the computation of this notion.)
In these contexts, obtaining the best possible upper bound for $\pi_{\mathcal N(G)}$ (and not just the exponent matching the VC density) translates to qualitatively better structural bounds and algorithms; see for instance~\cite{twin-width-kernels,reduced-bandwidth,Eickmeyer17,Reidl19}.

The \emph{$r$-neighbourhood complexity} of $G$ is the neighbourhood complexity of $G^r$, with same vertex set as $G$, and an edge between two vertices at distance at most~$r$ in $G$.
Reidl et al.~\cite{Reidl19} showed that among subgraph-closed classes, bounded expansion\footnote{A~notion from the Sparsity theory of Ne\v{s}et\v{r}il and Ossona de Mendez~\cite{Sparsity} extending bounded degree and proper minor-free classes.} is equivalent to linear $r$-neighbourhood complexity.
Indeed, the more general nowhere dense classes~\cite{nowhere}\footnote{Another invention of the Sparsity program~\cite{Sparsity}.} have almost linear $r$-neighbourhood complexity~\cite{Eickmeyer17}: there is a function $f: \mathbb N \times \mathbb N \to \mathbb N$ such that for every $\varepsilon > 0$, $\pi_{\mathcal N(G^r)}(n) \leqslant f(r,\varepsilon) n^{1+\varepsilon}$ for all $n$.
On hereditary classes, i.e., closed under taking \emph{induced} subgraphs, there is no known characterisation of linear neighbourhood complexity.

As we already mentioned in a different language, bounded twin-width classes have been proven to have linear neighbourhood complexity. See~\cite[Lemma 3]{twin-width-kernels} or~\cite[Section 3]{twin-width-VCdensity} for two independent proofs, both using the Marcus-Tardos theorem~\cite{MarcusT04}.
However, the dependence in the twin-width is doubly exponential in both papers.
Setting $\nu(d,k)$ as the maximum number of distinct neighbourhoods on a set of size $k$ within a graph of twin-width at most~$d$, i.e., $\max\{\pi_{\mathcal N(G)}(k)~|~G~\text{has twin-width at most}~d\}$, they show that $\nu(d,k) \leqslant \exp(\exp(O(d)))k$.
There is a~recent third proof not using the Marcus-Tardos theorem~\cite{Gajarsky22}.
The authors tackle the~more general problem of bounding the number of distinct first-order definable subsets within a fixed set. 
In the particular case of neighbourhoods, even though this is not made explicit in~\cite{Gajarsky22}, their proof gives a similar upper bound of $\nu(d,k)$ to ours.

\paragraph{Our results.} In this note, we give in~\cref{sec:upperbound} a short and self-contained proof (also not using the Marcus-Tardos theorem) that $\nu(d,k) \leqslant 2^{d+\log d+\Theta(1)}k$.
In~\cref{sec:lowerbound}, we complement that proof with a~construction of a bipartite graph witnessing that $\nu(d,k) \geqslant 2^{d+\log d+\Theta(1)}k$, which makes our single-exponential upper bound in twin-width essentially tight.

\section{Preliminaries}\label{sec:prelim}

We use the standard graph-theoretic notations: $V(G)$, $E(G)$, $G[S]$, $G - S$ respectively denote the vertex set, edge set, subgraph of $G$ induced by $S$, and subgraph of $G$ induced by $V(G) \setminus S$.
If $v \in V(G)$, then the \textit{open neighbourhood of vertex $v$ in $G$} denoted by $N_G(v)$ (or $N(v)$ if $G$ is clear from the context) is the set of neighbours of $v$ in $G$. 
If $X \subseteq V(G)$, then an \emph{$X$-neighbourhood} is a set $N(v) \cap X$ for some $v \in V(G)$.

We now define the twin-width of a graph, following the definition of \cite{twin-width-I}. 

A {\em trigraph} is a triple $G=(V(G),E(G),R(G))$ where $E(G)$ and $R(G)$ are two disjoint sets of edges on $V(G)$: the usual edges (also called {\em black edges}) and the {\em red edges}. Informally, a red edge between two vertices $u$ and $v$ means that some errors have been made between $u$ and $v$. The {\em red degree} of a trigraph is the maximum degree of the graph $(V(G),R(G))$. Any graph $G$ can be interepreted as a trigraph $G=(V(G),E(G),\emptyset)$. 
Given a trigraph and two vertices $u,v\in V(G)$ (not necessarily adjacent), the trigraph $G/u,v = G'$ is obtained by {\em contracting} $u$ and $v$ in a new vertex $w$ such that:
\begin{itemize}
    \item $V(G')=\{w\}\cup V(G)\setminus \{u,v\}$;
    \item the edges between vertices of $V(G)\setminus \{u,v\}$ are the same in $G'$;
    \item we set the edges incident to $w$ in the following way:
    \begin{itemize}
        \item $wx\in E(G')$ if $xu\in E(G)$ and $xv \in E(G)$;
        \item $wx\notin E(G')\cup R(G')$ if $xu\notin E(G)\cup R(G)$ and $xv \notin E(G)\cup R(G)$;
        \item $wx\in R(G')$ otherwise.
    \end{itemize}
\end{itemize}

In other words, the common black neighbours of $u$ and $v$ are black neighbours of $w$. All the other neighbours of $u$ or $v$ are red neighbours of $w$. Red edges stay red, black edges stay black, red and black edges become red. Moreover, non-edges stay as non-edges, non-edges and red edges become red edges, and non-edges and black edges become red edges.
We say that $G/u,v$ is a \emph{contraction} of $G$.
A~\emph{$d$-sequence} of an $n$-vertex graph $G$ is a sequence of $n$ trigraphs $G=G_n, G_{n-1},....,G_1$ such that each trigraph $G_i$ is obtained from $G_{i+1}$ by a contraction and has red degree at most~$d$. 
The {\em twin-width} of $G$, denoted by $\tw(G)$, is the minimum integer~$d$ such that $G$ admits a $d$-sequence. 
Note that an induced subgraph of~$G$ has a twin-width smaller or equal to the twin-width of~$G$~\cite{twin-width-I}.

If $u \in G_i$, then $u(G)$ denotes the set of vertices of $G$ eventually contracted to $u$ in $G_i$. 
Instead of considering the trigraphs $G_i$, we might prefer to deal with the partitions of $V(G)$ induced by the sets $u(G)$ for $u$ in $G_i$:
 $\mathcal P_i=\{u(G)~|~u \in V(G_i)\}$. In this setting, we say that $u(G)$ is a \textit{part} of $\mathcal{P}_i$. 
 We say that there is a red edge, a black edge or a non-edge between two parts $u(G)$ and $v(G)$ of $\mathcal P_i$ if $uv$ is a red edge, a black edge or a non-edge in $G_i$.

\section{Upper bound on the number of distinct neighbourhoods}\label{sec:upperbound}

We state and prove our upper bound on the maximum number of distinct $X$-neighbourhoods in bounded twin-width graphs.

\begin{theorem}\label{The:twinwidth}
Let $G$ be an $n$-vertex graph of twin-width~$d$, and $X \subseteq V(G)$ where $X\neq\emptyset$. Then the number of distinct $X$-neighbourhoods in $G$ is at most $(d+2)2^{d+1}|X| = 2^{d+\log d+\Theta(1)}|X|$.
\end{theorem}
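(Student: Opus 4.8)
The plan is to exploit a $d$-sequence $G = G_n, G_{n-1}, \dots, G_1$ of $G$ and track how many distinct $X$-neighbourhoods can ``survive'' as we run the sequence backwards, from the trivial partition $\mathcal P_1$ (one part) down to the discrete partition $\mathcal P_n$. For a fixed vertex $v \in V(G)$, and at stage $i$, the neighbourhood $N_G(v) \cap X$ is ``approximated'' by the parts of $\mathcal P_i$: each part $P$ of $\mathcal P_i$ is, relative to the vertex $v$, either entirely a black neighbour, entirely a non-neighbour, or joined by a red edge (these are the only possibilities since $v$'s singleton part sees $P$ via exactly one of black/non-/red). So $N_G(v) \cap X$ is determined by the set of ``black'' parts together with whatever happens inside the parts joined by red edges. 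The key point is that $v$'s part has red degree at most $d$, so there are at most $d$ ``uncertain'' parts at any stage; the black/non- parts are already decided.

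The main step is a charging/counting argument run along the sequence. I would define, for each stage $i$, a set of ``candidate traces'' on $X$: partial information consisting of which parts of $\mathcal P_i$ are black-adjacent, non-adjacent, or red-adjacent to a hypothetical vertex whose singleton will eventually be separated out. When we refine $\mathcal P_{i}$ to $\mathcal P_{i+1}$ by splitting one part into two (reversing a contraction), a red-adjacent part can split into at most $2$ parts each of which may be black, non-, or red; and crucially, once a part becomes black-adjacent or non-adjacent it stays that way for the sub-vertex in question only if... — here one must be careful, because refining a black part could still matter for a *different* target vertex. The cleaner formulation: count, over all $v \in V(G)$, the number of distinct sets $N_G(v)\cap X$, by showing each such set is determined by a bounded-size ``certificate'' relative to some $\mathcal P_i$, namely the stage $i$ at which $v(G_i)$ — or rather the relevant part — first has all of $X$ resolved up to its $\le d$ red neighbours, plus the $\le d$ red parts, each contributing a constant amount of extra choice, plus which of the (at most $|X|$, since parts of $\mathcal P_i$ restricted to $X$ number at most $|X|$) parts are black. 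Summing a geometric-type recursion over the $n$ stages, with branching factor controlled by the red degree $d$, should yield the bound $(d+2)2^{d+1}|X|$: roughly, $|X|$ from the linear number of parts meeting $X$, a factor $2^{d}$ from the binary choices when resolving the $\le d$ red parts, a factor $d$ from choosing which red part to resolve next / at which stage, and small constant factors from the $2$-way splits and the three adjacency states.

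Concretely I would prove a lemma of the form: for every $\emptyset \ne X \subseteq V(G)$ and every $i$, the number of sets of the form $(N_G(v) \cap X)$ over $v\in V(G)$ that are ``already determined by $\mathcal P_i$'' (i.e. $v$'s part in $\mathcal P_i$ has no red edge to any part meeting $X$) is at most the number of parts of $\mathcal P_i$ meeting $X$, which is at most $|X|$; and then bound the number of $v$ whose $X$-neighbourhood is *not yet* determined at the last stage where it fails to be, by walking up the sequence one contraction at a time and observing that each undetermined trace at stage $i+1$ refines at most one part-of-interest of an undetermined trace at stage $i$ into $\le 2$ pieces with $\le d$ red neighbours among the $\le |X|$ relevant parts. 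The hard part will be setting up the right invariant so that the per-vertex ``uncertainty'' is genuinely bounded by the red degree at *every* intermediate stage and so that the double counting over the $n$ stages telescopes to a clean linear-in-$|X|$ bound with the stated $2^{d+\log d + \Theta(1)}$ coefficient, rather than accumulating an extra factor of $n$ or an extra exponential. I expect the bookkeeping — deciding exactly which stage to ``charge'' each neighbourhood to, and showing no neighbourhood is charged more than once while every neighbourhood is charged — to be the crux, and the arithmetic to consolidate $|X|$, the $2^{d+1}$, and the $(d+2)$ factor will fall out of a short case analysis on the three adjacency states of the $\le d$ red parts at the charging stage.
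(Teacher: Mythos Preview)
Your proposal is a plan rather than a proof, and the one concrete lemma you do state is false. You claim that at each stage $i$ the number of $X$-neighbourhoods ``already determined by $\mathcal P_i$'' (meaning $v$'s part has no red edge to any part meeting $X$) is at most the number of parts of $\mathcal P_i$ meeting $X$. But at $i=n$ every part is a singleton and there are no red edges, so \emph{every} $X$-neighbourhood is determined---and there can be far more than $|X|$ of them (that is exactly the quantity we are trying to bound). More generally, a part $P$ disjoint from $X$ with only black/non edges to the $X$-meeting parts determines one $X$-neighbourhood (a union of blocks $Q\cap X$); the number of such $P$ is bounded only by $i$, not by $|X|$, and distinct such $P$ can yield distinct unions. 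So the lemma you want cannot hold stage-by-stage; at best you would need an incremental version (how many \emph{new} determined neighbourhoods appear when one part is split in two), and you have neither formulated nor proved such a statement. Everything you yourself flag as ``the crux''---the charging rule, the invariant, why the count telescopes to $(d+2)2^{d+1}|X|$ rather than picking up a factor of $n$---is precisely the content of the proof, and it is absent.

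For contrast, the paper does not run along the sequence at all. It argues by induction on $k=|X|$: assuming every vertex outside $X$ has a distinct $X$-neighbourhood, it finds some $x\in X$ such that at most $(d+2)2^{d+1}$ pairs of outside vertices become $X'$-twins when $X'=X\setminus\{x\}$; deleting $x$ and one vertex from each such pair and applying induction gives the bound. The existence of such an $x$ comes from a \emph{single} moment of a $d$-sequence, namely the last stage $\mathcal P_i$ in which every part contains at most one vertex of $X$. At that stage each part holds at most $2^{d+1}$ vertices outside $X$ (they are distinguished only by the $\le d+1$ vertices of $X$ in the part and its red neighbours). The next contraction merges two parts containing $x_1,x_2\in X$; a short red-degree count on the merged part then forces the number of $T_{x_1}$-pairs to be at most $(d+2)2^{d+1}$. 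This delivers the exact constant directly, with no recursion along the sequence and no stage-by-stage bookkeeping.
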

\begin{proof}
Fix non-empty $X \subseteq V(G)$. 
First of all, for all vertices of $V(G) \setminus X$ with the same $X$-neighbourhood, we keep only one representative. 
Note that the new graph $G''$ is an induced subgraph of $G$, thus its twin-width is at most $d$. We further modify graph $G''$ by adding for each $v\in X$ a new vertex $u$ to $G''$ so that $N(u)=N(v)$ if such vertex does not exist in $V(G'')\setminus X$. We do this one vertex at a time. The new graph is called $G'$ and it has the same twin-width as $G''$.

Let $M=(d+2)2^{d+1}+1$. 
We prove by induction on $n$ that an $n$-vertex graph of twin-width at most $d$ with a set $X$ of $k\geq1$ vertices, where all vertices outside $X$ have a distinct $X$-neighbourhood, satisfies $n \leqslant kM$. 
This will prove that $G'$ has at most $kM$ vertices, and thus that in $G$, there are at most $(M-1)k$ distinct $X$-neighbourhoods.

The statement is trivially true for $n \leqslant 5$ since $M \geqslant 5$, for all $d \geqslant 0$.

Thus, assume $n \geqslant 6$. 
In particular, we have $k>1$. Let $x\in X$. Let $X'=X\setminus \{x\}$ and let $T_x$ be the set of pairs of vertices outside $X$ that are twins with respect to $X'$, i.e. $$T_x=\left\{\{u,v\} \in {{V(G') \setminus X} \choose 2} \mid N(u)\cap X'=N(v)\cap X'\right\}.$$ 
Since every vertex of~$V(G') \setminus X$ has a~distinct neighbourhood in $X$, there are at most two vertices of $V(G') \setminus X$ with the same (possibly empty) neighbourhood $N$ in $X'$; namely the vertices $u, v \in V(G') \setminus X$ with $N(u) \cap X = N$ and $N(v) \cap X = N \cup \{x\}$ (if they exist).
Hence, $T_x$ consists of pairwise-disjoint pairs of vertices.

We prove the following claim.

\begin{claim}\label{clm}
There exists a vertex $x$ of $X$ such that $T_x$ comprises at most $M-1$ pairs, in $G'$.
\end{claim}
\claimproof 
Consider a $d$-sequence of contractions $G'_n,\ldots,G'_1$ of $G'$. Consider the last step  $G'_i$ of the sequence where all the parts of $\mathcal P_i$ contain at most one vertex of $X$ (that is, contrary to $\mathcal P_i$, some part of $\mathcal P_{i-1}$ contains two vertices of $X$).

Let $P$ be a part of $\mathcal P_i$. Let $x$ be the unique (if there exists one) element of $P \cap X$. Then we claim that $|P\setminus X|\leqslant 2^{d+1}$. Indeed, any two vertices of $P\setminus X$ have some vertex in the symmetric difference of their $X$-neighbourhoods, either it is $x$, or some vertex $x'$ of $X$ outside $P$. If that distinguishing vertex is some $x'$ that is not in $P$, then there has to be a red edge between $P$ and the part that contains $x'$. 
There are at most $d$ red edges with $P$ as an extremity. Since all the elements of $X$ are in distinct parts in $G'_i$, it means that $d+1$ vertices of $X$ are enough to distinguish all the $X$-neighbourhoods of vertices of $P\setminus X$, and thus $|P\setminus X|\leqslant 2^{d+1}$.

We now consider the next contraction in the sequence, which leads to $G'_{i-1}$. By definition of $G'_i$, it must contract two vertices corresponding to two parts of $\mathcal P_i$ that both contain an element of $X$. Let $x_1$ and $x_2$ be these two elements of $X$. Let $Q$ be the part of $\mathcal P_{i-1}$ that contains both $x_1$ and $x_2$.
Let $\{u,v\}$ be a pair of $T_{x_1}$ and let $T_{x_1}$ contain $M'$ pairs. Since $u$ and $v$ have the same neighbourhood in $X\setminus \{x_1\}$, it means that they are either both adjacent or both non-adjacent to $x_2$, and exactly one of them is adjacent to $x_1$. Thus, necessarily, one vertex among the pair $\{u,v\}$ is adjacent to exactly one vertex among $\{x_1,x_2\}$. In particular, if this vertex is not in $Q$, then there has to be a red edge between the part containing this vertex and the part $Q$ in $G'_{i-1}$.
Since $T_{x_1}$ contains $M'$ pairs (which are disjoint) and $Q$ has at most $2^{d+2}$ vertices not in $X$, there are at least $M'-2^{d+2}$ vertices not in $X$ whose part in $G'_{i-1}$ has a red edge to $Q$. 
Since each other part has at most $2^{d+1}$ vertices not in $X$, it makes at least $\frac{M'-2^{d+2}}{2^{d+1}}$ red edges incident to $Q$.
Thus, we must have 
$\frac{M'-2^{d+2}}{2^{d+1}}\leqslant d$, leading to $M'\leqslant 2^{d+1}(d+2)=M-1$, which proves the claim.~\smallqed

By Claim~\ref{clm}, there exists a vertex $x\in X$ such that $|T_x|\leqslant M-1$.
Let $Y$ be a set of $|T_x|$ vertices that intersects each pair of $T_x$ exactly once.
Let $G_Y = G' - (Y \cup \{x\})$. Then, $X'=X\setminus \{x\}$ is a vertex set of size $k-1$ such that all $X'$-neighbourhoods of vertices outside $X'$ are distinct. 
The graph $G_Y$ has at least $n-M$ vertices, and twin-width at most $d$. 
By induction, we have $n-M\leqslant|V(G_Y)|\leqslant (k-1)M$ and thus, $n\leqslant kM$. Hence, once we recall that no vertex in $X$ has unique $X$-neighbourhood, there are at most $(M-1)k$ distinct $X$-neighbourhoods, which completes the proof.
\end{proof}

\section{Lower bound on the number of distinct neighbourhoods}\label{sec:lowerbound}

Notice that when $|X|$ and $\tw(G)$ are roughly the same, the bound from Theorem~\ref{The:twinwidth} cannot be sharp, since $G'$ has at most $2^{|X|}+|X|$ vertices. However, when $|X|$ is large enough compared to $\tw(G)$, we next show that the bound is sharp up to a constant factor.

\begin{proposition}\label{prop:tww-example}
There is a positive constant $c$, such that for any integer $d$, there is a bipartite graph $G$ of twin-width at most $d$, and a large enough set $X \subseteq V(G)$, with at least $c \cdot d2^d |X|=2^{d+\log d+\Theta(1)}|X|$ distinct $X$-neighbourhoods in $G$.
\end{proposition}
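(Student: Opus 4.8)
The plan is to construct a bipartite graph whose "left" side $X$ is large, and whose "right" side consists of vertices realizing many distinct neighbourhoods on $X$, while keeping the twin-width low by exploiting a recursive/product structure. The natural target is roughly $d2^d$ distinct neighbourhoods per vertex of $X$, matching the upper bound of Theorem~\ref{The:twinwidth}. I would build $X$ as a disjoint union of $d$ "blocks" $X_1, \dots, X_d$, each of size $t$ for some large $t$, so $|X| = dt$; then for each block $X_i$ I put in the graph all $2^t$ possible subsets as neighbourhoods of fresh right-vertices, contributing $2^t$ neighbourhoods localized to $X_i$. To get the extra factor of $d$ (as opposed to just $2^{|X|/d}$ per block, which is only linear in $|X|$ when summed), I would instead take $t = \Theta(\log |X|)$ small and use a construction where right vertices have neighbourhoods spanning \emph{one full block plus a single extra vertex from another block}, so the count becomes something like $d \cdot (2^t \cdot (d-1)t)$; tuning $t$ so that $2^t \approx d$ gives $\approx d^2 t \cdot d / d = d2^d \cdot (\text{stuff})$ — the precise balancing of parameters is where care is needed.

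**Second,** the core of the argument is bounding the twin-width of this graph by $d$. Here I would exploit that the graph is bipartite and has a strong block symmetry: within each block $X_i$ the right-neighbours realizing subsets of $X_i$ can be contracted in a controlled order. A clean approach is to give an explicit $d$-sequence (or $O(d)$-sequence and then absorb constants into the claimed constant $c$): first contract, block by block, all right-vertices sharing the same neighbourhood-within-a-block, tracking that red edges only appear between a block $X_i$ and the "gadget" vertices currently attached to it; since each right vertex touches essentially one block plus $O(1)$ extra vertices, the red degree stays bounded by the number of active blocks it interacts with, which is $O(1)$, times the block size, which we control. Then contract each block $X_i$ internally down to a single vertex, and finally contract the whole left side; the right side, having been reduced to few vertices, is contracted last. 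Verifying at each stage that no vertex ever has more than $d$ red neighbours is the technical heart.

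**The main obstacle** I anticipate is precisely this twin-width verification: product-like or subset-realizing constructions are prone to blowing up the red degree during contractions, because contracting two right vertices with different neighbourhoods creates red edges to their symmetric difference. The trick will be to order contractions so that at the moment we merge two vertices, their symmetric difference lies entirely inside \emph{already-contracted} (or small) parts — a standard but delicate technique in twin-width lower-bound constructions. I would likely handle this by first fully contracting each block $X_i$ to a single vertex \emph{before} touching the right vertices attached to it, so that a right vertex attached to all of $X_i$ becomes attached (black) to the single part for $X_i$, and then right vertices differing only inside $X_i$ become genuine twins and contract for free. The remaining $O(1)$ extra edges per right vertex are what generate red edges, and bounding their contribution to red degree by $O(d)$ (using that only $\Theta(d)$ blocks exist and each contributes $O(1)$) should close the argument. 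Finally, I would double-check the arithmetic: with $|X| = dt$ and the right side contributing $\Theta(d \cdot 2^t \cdot dt)$ neighbourhoods, setting $2^t$ proportional to $2^d/(d \cdot t)$ — i.e., $t \approx d - \log d$ — yields $\Theta(d 2^d \cdot |X| / (dt) \cdot dt) = \Theta(d2^d |X|)$ after simplification, giving the claimed $c \cdot d 2^d |X| = 2^{d+\log d + \Theta(1)}|X|$ for a suitable absolute constant $c$, in the regime where $|X|$ (equivalently $t$, equivalently $d$) is large.
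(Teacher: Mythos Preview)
Your proposal never settles on a single construction, and none of the variants you float actually achieves the target. If a right vertex is adjacent to ``one full block $X_i$ plus a single extra vertex from another block'', then there are only $d\cdot (d-1)t = \Theta(d^2 t)$ such neighbourhoods, and with $|X|=dt$ the ratio is $\Theta(d)$, not $\Theta(d2^d)$: the $2^t$ factor you insert into the count has no source. Conversely, if you let right vertices realise \emph{arbitrary} subsets of a block $X_i$ (which is where a $2^t$ would come from), then your twin-width plan of ``contract $X_i$ to a single vertex first'' collapses: midway through contracting $X_i$, the current $X_i$-part is joined by red edges to essentially all $2^t$ right vertices attached to $X_i$, so the red degree is exponential, not $O(d)$. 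The final paragraph's arithmetic (``$\Theta(d2^d\cdot|X|/(dt)\cdot dt)$'') is circular: it assumes the answer rather than deriving it.

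The genuine obstacle you are missing is that to get a $2^{\Theta(d)}$ factor you must let right vertices encode arbitrary subsets of some window of size $\Theta(d)$ in $X$, and then the contraction sequence must be designed so that this window \emph{slides}: at any moment only $O(d)$ vertices of $X$ are ``active'' for the subset part, and the rest of each neighbourhood has enough structure (in the paper, a consecutive interval of length $\leqslant A\approx\sqrt d$ followed by a marker at distance $\leqslant B\approx\sqrt d$) that it can be absorbed with $O(AB)=O(d)$ red edges. The interval structure is what makes the left-to-right sweep keep red degree bounded; a block decomposition with no ordering inside or between blocks does not give you this. Your sketch contains neither a construction with this sliding feature nor a red-degree analysis, so as it stands it is a plan rather than a proof.
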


\begin{proof}
Observe that the claim is clearly true for any small $d$. Thus, we do not need to consider separately graphs with small twin-width upper bounded by a constant. Hence, we assume from now on that $d\geq d'$ where $d'$ is some positive constant (at least $3$).

We construct the graph $G$ as follows.
Let $A$, $B$, $C\in \mathbb{Z}$ be three constants that will be given later ($A$ and $B$ will be roughly equal to $\sqrt{d}$ and $C$ will be roughly equal to $d$). Let $X=\{x_1,...,x_k\}$ be an independent set of $k\geq d+2\sqrt{d-2}+1$ 
vertices. 
Our goal is to construct $G$ so that each vertex 
%
in $V(G)\setminus X$ has a unique $X$-neighbourhood. 
For any integers $i,j,t$ with $1\leqslant i\leqslant  j \leqslant i+ A-1$, $j+2\leqslant t \leqslant j+1+B$ and $t\leqslant k-C$, we create a set $V_{i,j,t}$ of vertices as follows. Consider the set $X_t=\{x_{t+1},...,x_{t+C}\}$. For every subset $Y$ of $X_t$, let $Y'=\{x_i,...,x_j,x_t\}\cup Y$ and add a vertex $v_{Y'}$ to $V_{i,j,t}$, making it adjacent to the vertices of $Y'$.
Each set $V_{i,j,t}$ has size $2^{C}$ and there are $\Theta(kAB)$ (for fixed $A$, $B$ and $C$, and growing $k$) such sets. 
Thus there are $\Theta(kAB2^{C})$ vertices in the graph.

Any two vertices not in $X$ have distinct $X$-neighbourhoods. Indeed, by considering the natural ordering of $X$ induced by the indices, any vertex not in $X$ is first adjacent to a consecutive interval of vertices from $x_i$ to $x_j$, then is not adjacent to vertices from $x_{j+1}$ to $x_{t-1}$ (which is not empty since $t\geqslant j+2$), and then adjacent to $x_t$. Thus, if two vertices have the same $X$-neighbourhood, they must be in the same set $V_{i,j,t}$. But then, they have a distinct neighbourhood in $\{x_{t+1},...,x_{t+C}\}$.

We now prove that the twin-width of $G$ is at most $M=\max\{AB,C\}+2$. For that, we give a sequence of contractions with red degree at most $M$.

The contraction sequence is split into $k-C$ steps. During these steps we first consider  vertices of $X$ one by one and then in the last one we deal with the remaining vertices of $X$.
Let $0\leq \ell \leq k-C-1$. Step~$0$ corresponds to the starting point, where each vertex is alone. Let $\ell\geqslant 1$. 
After Step~$\ell$, there will be the following parts in the corresponding partition (vertices not in any of the mentioned parts are in corresponding singleton parts containing only the vertices themselves):
\begin{itemize}
    \item Let $i=\ell$. For each $j,t$ such that $i\leqslant j \leqslant i+A-1$ and $j+2\leqslant t\leqslant j+1+B$, there is a part $B_{j,t}$. The parts $B_{i,t}$ (parts with $j=i$), contain all the vertices of the sets $V_{i',j',t}$ such that $j'\leq i$. The parts $B_{j,t}$ with $j>i$ contain all the vertices of the sets $V_{i',j',t}$ such that $i'\leqslant i$ and $j'=j$. Note that there are $AB$ non-empty $B_{j,t}$ parts in total.
    \item There is a part $X_0$ that contains vertices from $x_1$ to $x_\ell$ of $X$.
    \item There is a part $T$ (for ``trash'') that contains all the vertices of the sets $V_{i',j,t}$ with $t\leqslant \ell+1$.
\end{itemize}

All the other vertices are not yet contracted. This corresponds to the vertices from $x_{\ell+1}$ to $x_k$ of $X$ and to the vertices of the sets $V_{i',j,t}$ with $i'>i=\ell$. Indeed, if $i'\leqslant i$ and $t\leqslant i+1$, then the vertices of $V_{i',j,t}$ are in $T$. If $t\geqslant i+2$ but $j\leqslant i$, then they are in the part $B_{i,t}$. If $j>i$, then they are in the part $B_{j,t}$.

We first prove that the red degree after Step~$\ell$ is at most $M$. Then, we explain how to get from Step $\ell$ to Step~$\ell+1$ by keeping the red degree at most $M$.

Consider the part $B_{j,t}$ at the end of Step~$\ell$. A vertex in this part belongs to some set $V_{i',j',t}$ with $i'\leqslant i=\ell$ and $j'=j$ if $j>i$ or $j'\leqslant i$ otherwise. In particular, two vertices of $B_{j,t}$ are adjacent to all the vertices between $x_{i+1}$ and $x_j$, to no vertex between $x_{j+1}$ and $x_{t-1}$, to $x_t$, and to no vertex after $x_{t+C}$. Thus, there is a red edge between the parts $B_{j,t}$ and $X_0$, and $C$ red edges between the part $B_{j,t}$ and the vertices $\{x_{t+1},...,x_{t+C}\}$. Therefore, the number of red edges incident with $B_{j,t}$ is at most $C+1$.

Consider now the part $T$. Vertices in $T$ are adjacent only to vertices of $X$ up to $x_{\ell+C+1}$. Since vertices $x_1$ to $x_\ell$ are all in the part $X_0$, the red degree of $T$ is at most $C+2$.

Single vertices not in $X$ have no incident red edges: indeed, they are all in some sets $V_{i',j,t}$ for $i'>i=\ell$ and thus are not adjacent to any vertex of $X_0$. For the same reason, there are red edges incident to $X_0$ only to $T$ and to the parts $B_{j,t}$. Hence, the red degree of $X_0$ is at most $AB+1$. Similarly, the red degree of $x_{i'}$, $i'>i+1$ is at most $AB+1$. Moreover, the red degree of $x_{i+1}$ is at most one. Indeed, the only red edge is between $x_{i+1}$ and $T$.

Finally, the red degree after step~$\ell$ is at most $\max \{AB+1,C+2\}\leqslant M$.

Let $\ell\geq 0$. We now explain how we perform the contractions to go from step~$\ell$ to step~$\ell+1$.
\begin{enumerate}
\item (only if $\ell\geq 1$) Let $i=\ell$. For any $i+3\leqslant t\leqslant i+2+B$, merge the part $B_{i,t}$ with the part $B_{i+1,t}$ resulting in part $B_{i+1,t}$. 
The only new red edge this merging may lead to, when $B_{i,t}$ is non-empty, is between $B_{i+1,t}$ and $x_{i+1}$.
Thus, we add only one red edge between $x_{i+1}$ and $B_{i+1,t}$. Thus, the red degree of $B_{i+1,t}$ is at most $C+2$ and the red degree of $x_{i+1}$ is at most $2$.
\item Add all the vertices of $V_{i+1,j,t}$ for some $j,t$ to the part (that might be empty at this point) $B_{j,t}$. The red degree of $B_{j,t}$ is at most $C+2$ since we might have a red edge between $B_{j,t}$ and $x_{i+1}$. The number of nonempty parts $B_{j,t}$ at this point is at most $AB+1$ (there is still the part $B_{i,i+2}$). Adding $T$, this gives $AB+2$ red edges incident to a vertex in $X$ (or from part $X_0$).
\item Add $x_{\ell+1}$ to $X_0$. The part $X_0$ can have red edges only to non-empty parts $B_{j,t}$  and to $T$, but no red edges to the single vertices. Thus, it has red degree at most $AB+2$.   
\item Put the part $B_{i,i+2}$ into $T$. This part is only adjacent to vertices up to $x_{\ell+2+C}$, and thus has at most $C+2$ red edges.
\end{enumerate}

Thus, at each point, the red degree is always at most $M=\max\{AB,C\}+2$.

The process ends at step $\ell=k-C-1$. Then, all the vertices not in $X$ are in some parts, and there are at most $AB+1$ such parts. On the other side of the bipartition, we have part $X_0$ and $C+1$ single vertices. Thus, the graph is bipartite with both sides of size at most $M$. One can contract each part independently to finish the contraction sequence.

To conclude, taking $C=d-2$ and $A=B=\lfloor \sqrt{d-2}\rfloor$, we have $M\leqslant d$ and $kAB2^C=\Theta(kd2^d)$. Notice that we may assume that $A,B$ and $C$ are positive since $d\geq d'$ where $d'$ was some well chosen positive constant. This concludes the proof.
\end{proof}

\section{Conclusion}\label{sec:conclu}

We have given an essentially tight upper bound for the neighbourhood complexity of graphs of bounded twin-width together with a construction almost attaining this upper bound. Moreover, our method is simple and self-contained. A similar upper bound was implied by the techniques in \cite{Gajarsky22} (though not stated explicitly). 

It is known that the twin-width of $G^r$ can be upper-bounded by a function of the twin-width of $G$ and~$r$~\cite{twin-width-I}.
Thus, graphs of twin-width at most~$d$ have linear $r$-neighbourhood complexity. Recently, improved bounds were given for planar graphs and proper minor-closed graph classes in~\cite{joret2023neighborhood} (such graphs also have bounded twin-width). 
We leave as an interesting open problem to obtain an essentially tight twin-width dependence for the $r$-neighbourhood complexity.

We remark that the neighbourhood complexity is also related to \emph{identification problems} on graphs such as \emph{identifying codes} or \emph{locating-dominating sets}, where one seeks a (small) set $A$ of vertices of a~graph such that all other vertices have a distinct neighbourhood in $A$~\cite{FMNPV17}. Some works in this area about specific graph classes, are equivalent to the study of the neighbourhood complexity of these graph classes: see for example~\cite{BLLPT15,FMNPV17,RS84}. Moreover, we note that for graph classes with VC density~1, since any solution has linear size, the natural minimisation versions of the above identification problems have a polynomial-time constant-factor approximation algorithm (trivially select the whole vertex set), while such an algorithm is unlikely to exist in the general case~\cite{BLLPT15}. Thus, the bounds given in the current work imply a better approximation ratio for these problems, when restricted to input graph classes of bounded twin-width.

\medskip

\textbf{Acknowledgement.}
We thank an anonymous reviewer for pointing out the implicit upper bound of~\cite{Gajarsky22} mentioned in the introduction.

\bibliographystyle{plain}
\bibliography{main}

\end{document}